\documentclass[11pt]{article}

\usepackage[letterpaper,margin=1.1in]{geometry}
\usepackage[T1]{fontenc}
\usepackage{amsmath,amsthm}
\usepackage{newpxtext}
\usepackage{newpxmath}
\usepackage{mathtools}
\usepackage{microtype}
\usepackage[hidelinks]{hyperref}
\usepackage[nameinlink,noabbrev]{cleveref}

\numberwithin{equation}{section}
\allowdisplaybreaks

\newtheorem{theorem}{Theorem}[section]
\newtheorem{proposition}[theorem]{Proposition}
\newtheorem{lemma}[theorem]{Lemma}
\newtheorem{corollary}[theorem]{Corollary}
\theoremstyle{definition}
\newtheorem{definition}[theorem]{Definition}
\newtheorem{example}[theorem]{Example}
\theoremstyle{remark}
\newtheorem{remark}[theorem]{Remark}

\crefname{theorem}{theorem}{theorems}
\crefname{proposition}{proposition}{propositions}
\crefname{lemma}{lemma}{lemmas}
\crefname{corollary}{corollary}{corollaries}
\crefname{definition}{definition}{definitions}
\crefname{example}{example}{examples}
\crefname{remark}{remark}{remarks}

\newcommand{\SP}{\mathrm{SP}}
\newcommand{\OP}{\mathrm{OP}}
\newcommand{\ellp}{\ell}
\newcommand{\vac}{\mathbf{1}}
\newcommand{\cQ}{\mathcal{Q}}
\newcommand{\Yop}{\mathcal{Y}}
\newcommand{\Ht}{\mathsf{H}}

\newcommand{\Pf}{\operatorname{Pf}}
\newcommand{\Pfint}{\operatorname{Pf}_{\mathrm{int}}}

\title{A Modified Greaves--Jing--Zhu Operator and a Shifted
\texorpdfstring{$t$}{t}-Gessel Formula}
\author{S.-J. Lee}
\date{}

\begin{document}

\maketitle

\begin{abstract}
The recent work of Greaves, Jing, and Zhu gives an operator construction
for the $t$-Schur functions and the $t$-Schur measure. 
Motivated by their construction, we consider the same type of vertex
operator on the odd power-sum ring. Its Fourier modes generate a family
of symmetric functions indexed by strict partitions, which we call
shifted $t$-Schur functions.
These functions specialize to Schur $Q$-functions at
$t=0$. We derive a
two-row formula, a Pfaffian Giambelli formula, a Cauchy identity, and a
finite shifted Gessel-type formula. This note is intended as a first step
toward further study of the odd-operator analogue of the Greaves--Jing--Zhu
construction.
\end{abstract}

\section{Introduction}

Vertex operators are useful tools in the theory of symmetric functions.
They are built from two simple operations: multiplication by power sums
and differentiation with respect to power sums. Taking exponentials of
these operations gives formal Laurent series whose coefficients act on
rings of symmetric functions. In this way, many classical symmetric
functions can be constructed by operators. For example, Bernstein
operators produce Schur functions, and related Jing vertex operators produce Hall--Littlewood functions. For general background,
see \cite{JingHL,Macdonald}.

There is also an odd version of this construction. Instead of using all
power sums, one works with the subring generated by
\[
p_1,p_3,p_5,\ldots .
\]
This odd power-sum ring is the natural setting for Schur $Q$-functions.
These functions are indexed by strict partitions and are closely related
to shifted tableaux and projective representations of symmetric groups;
see \cite{HoffmanHumphreys,Jing1991,Stembridge}. Thus Schur
$Q$-functions may be viewed as the shifted analogue of ordinary Schur
functions.

Schur-type measures give another reason for studying these functions.
Okounkov's Schur measure is built from products
$s_\lambda(X)s_\lambda(Y)$ \cite{Okounkov}. The shifted Schur measure of
Tracy and Widom is built from products
$Q_\lambda(X)P_\lambda(Y)$ and is supported on strict partitions
\cite{TracyWidom}. When one restricts the largest part of the partition,
one obtains Gessel-type determinant or Pfaffian formulas
\cite{Gessel,TracyWidom}.

Matsumoto introduced the $t$-Schur measure and studied its asymptotic
behavior \cite{MatsumotoT}. More recently, Greaves, Jing, and Zhu
\cite{GJZ} gave a vertex-operator construction for the $t$-Schur
functions and the corresponding measure. One of their operators can be
written as
\begin{equation}\label{eq:GJZ-intro}
Y(z;t)
=
\exp\left(
\sum_{n\geq 1}\frac{1-t^n}{n}p_nz^n
\right)
\exp\left(
-\sum_{n\geq 1}\frac{1}{1-t^n}
\frac{\partial}{\partial p_n}z^{-n}
\right).
\end{equation}
The two factors involving $1-t^n$ and $(1-t^n)^{-1}$ cancel in the
basic commutator. Because of this cancellation, the operator product has
the same simple form as in the ordinary Schur case.

In this note, we apply the same idea to the odd power-sum ring. Namely,
we consider the operator
\begin{equation}\label{eq:odd-GJZ-intro}
\Yop(z;t)
=
\exp\left(
\sum_{\substack{n\geq 1\\ n\,\mathrm{odd}}}
\frac{2(1-t^n)}{n}p_nz^n
\right)
\exp\left(
-\sum_{\substack{n\geq 1\\ n\,\mathrm{odd}}}
\frac{1}{1-t^n}
\frac{\partial}{\partial p_n}z^{-n}
\right).
\end{equation}
The factor $2$ is the usual normalization in the Schur $Q$-function
case. A direct calculation gives
\[
\Yop(z;t)\Yop(w;t)
=
:\!\Yop(z;t)\Yop(w;t)\!:
\frac{1-w/z}{1+w/z}.
\]
Hence the parameter $t$ does not change the basic anti-commutation
relation of the modes. If
\[
\Yop(z;t)=\sum_{m\in\mathbb Z}\Yop_m(t)z^{-m},
\]
then
\[
\Yop_m(t)\Yop_n(t)+\Yop_n(t)\Yop_m(t)
=
2(-1)^m\delta_{m,-n}.
\]
This is the same relation that appears in the usual operator
construction of Schur $Q$-functions.

For a strict partition
$\lambda=(\lambda_1>\cdots>\lambda_{\ellp}>0)$, we define
\begin{equation}\label{eq:shifted-intro-def}
\cQ_\lambda(X;t)
=
\Yop_{-\lambda_1}(t)\cdots
\Yop_{-\lambda_{\ellp}}(t)\vac.
\end{equation}
For convenience, we call these functions shifted $t$-Schur functions.
When $t=0$, the operator $\Yop(z;t)$ becomes the standard Schur
$Q$-function vertex operator, and therefore
\[
\cQ_\lambda(X;0)=Q_\lambda(X).
\]

The first identity proved in this paper is the Cauchy identity
\begin{equation}\label{eq:Cauchy-intro}
\sum_{\lambda\in\SP}
\cQ_\lambda(X;t)P_\lambda(Y)
=
\prod_{i,j\geq 1}
\frac{(1+x_i y_j)(1-tx_i y_j)}
     {(1-x_i y_j)(1+tx_i y_j)},
\end{equation}
where $\SP$ is the set of strict partitions and
$P_\lambda=2^{-\ellp(\lambda)}Q_\lambda$. This reduces to the usual
Schur $Q$-Cauchy identity when $t=0$.

We also prove a finite shifted Gessel-type formula. For a positive
integer $h$, we define two finite skew-symmetric matrices
$A_h^{(t)}(X)$ and $B_h(Y)$ from the one-row and two-row functions. Then
\begin{equation}\label{eq:Gessel-intro}
\sum_{\substack{\lambda\in\SP\\ \lambda_1\leq h}}
\cQ_\lambda(X;t)P_\lambda(Y)
=
\Pfint
\begin{pmatrix}
-A_h^{(t)}(X)&I_{h+1}\\
-I_{h+1}&B_h(Y)
\end{pmatrix}
=
\det\left(
I_{h+1}-A_h^{(t)}(X)B_h(Y)
\right)^{1/2}.
\end{equation}
Here the first Pfaffian is taken in an interlaced order, which is
explained in \cref{sec:Gessel}. At $t=0$, this formula becomes the
shifted Gessel formula of Tracy and Widom \cite{TracyWidom}.

The paper is organized as follows. In \cref{sec:odd-operator}, we
define the odd Greaves--Jing--Zhu operator and derive its
anti-commutation relation, its rescaling property, and its Pfaffian
formula. In \cref{sec:Cauchy}, we prove the Cauchy identity. In
\cref{sec:Gessel}, we use a finite Pfaffian summation rule to prove the
truncated shifted Gessel formula.

\section{The Greaves-Jing-Zhu operator}\label{sec:odd-operator}

\subsection{The odd Heisenberg algebra}

Let
\[
\Gamma=\mathbb{Q}(t)[p_1,p_3,p_5,\ldots]
\]
be the ring generated by the odd power-sum symmetric functions. We
regard $\Gamma$ as a bosonic Fock space with vacuum vector $\vac=1$.
For every positive odd integer $n$, define
\begin{equation}\label{eq:odd-Heis}
a_{-n}=p_n,
\qquad
a_n=\frac{n}{2}\frac{\partial}{\partial p_n}.
\end{equation}
Then
\begin{equation}\label{eq:odd-Heis-rel}
[a_m,a_n]=\frac{m}{2}\delta_{m,-n}
\end{equation}
for nonzero odd integers $m,n$, and $a_n\vac=0$ for $n>0$.

\begin{definition}\label{def:odd-GJZ}
The \emph{odd GJZ operator} is
\begin{align}
\Yop(z;t)
&=
\exp\left(
\sum_{\substack{n\geq 1\\ n\,\mathrm{odd}}}
\frac{2(1-t^n)}{n}a_{-n}z^n
\right)
\exp\left(
-\sum_{\substack{n\geq 1\\ n\,\mathrm{odd}}}
\frac{2}{n(1-t^n)}a_nz^{-n}
\right)\notag\\
&=
\exp\left(
\sum_{\substack{n\geq 1\\ n\,\mathrm{odd}}}
\frac{2(1-t^n)}{n}p_nz^n
\right)
\exp\left(
-\sum_{\substack{n\geq 1\\ n\,\mathrm{odd}}}
\frac{1}{1-t^n}
\frac{\partial}{\partial p_n}z^{-n}
\right)\notag\\
&=\sum_{m\in\mathbb Z}\Yop_m(t)z^{-m}.
\label{eq:odd-GJZ}
\end{align}
\end{definition}

The normal-ordered product, denoted by $:\!\cdots\!:$, is obtained by
moving all creation operators $a_{-n}$ to the left of all annihilation
operators $a_n$.

\begin{proposition}[Operator product expansion]\label{prop:OPE}
The odd GJZ operator satisfies
\begin{equation}\label{eq:OPE}
\Yop(z;t)\Yop(w;t)
=:\!\Yop(z;t)\Yop(w;t)\!:
\frac{1-w/z}{1+w/z},
\end{equation}
where the rational factor is expanded as a formal power series in
$w/z$.
\end{proposition}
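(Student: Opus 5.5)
The plan is the standard Baker--Campbell--Hausdorff reordering argument. Write $\Yop(z;t)=E_+(z)E_-(z)$, where
\[
E_+(z)=\exp\Bigl(\sum_{n\text{ odd}}\tfrac{2(1-t^n)}{n}a_{-n}z^n\Bigr),\qquad
E_-(z)=\exp\Bigl(-\sum_{n\text{ odd}}\tfrac{2}{n(1-t^n)}a_nz^{-n}\Bigr).
\]
Then
\[
\Yop(z;t)\Yop(w;t)=E_+(z)\,E_-(z)E_+(w)\,E_-(w).
\]
The only step needed is to move $E_-(z)$ past $E_+(w)$. Set $A=-\sum_{m}\frac{2}{m(1-t^m)}a_mz^{-m}$ and $B=\sum_n\frac{2(1-t^n)}{n}a_{-n}w^n$. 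By \eqref{eq:odd-Heis-rel} the commutator $[A,B]$ is a scalar, central in the algebra. The identity $e^Ae^B=e^Be^Ae^{[A,B]}$ therefore applies, and it yields
\[
\Yop(z;t)\Yop(w;t)=:\!\Yop(z;t)\Yop(w;t)\!:\,e^{[A,B]}.
\]
The normal-ordered product is exactly $E_+(z)E_+(w)E_-(z)E_-(w)$, since all creation parts commute with each other and all annihilation parts commute with each other.

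Next I compute the scalar. Using $[a_m,a_{-n}]=\frac m2\delta_{m,n}$ for positive odd $m,n$,
\[
[A,B]=-\sum_{n\text{ odd}}\frac{2}{n(1-t^n)}\cdot\frac{2(1-t^n)}{n}\cdot\frac n2\,(w/z)^n
=-\sum_{n\text{ odd}}\frac{2}{n}(w/z)^n .
\]
The key point is that the factors $1-t^n$ cancel, just as in the GJZ case. For the resulting sum, recall $\sum_{n\text{ odd}}\frac{x^n}{n}=\frac12\log\frac{1+x}{1-x}$. This gives
\[
e^{[A,B]}=\exp\Bigl(-\log\tfrac{1+x}{1-x}\Bigr)=\frac{1-x}{1+x},\qquad x=w/z,
\]
as a formal power series in $w/z$. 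That is exactly \eqref{eq:OPE}.

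The main obstacle is justifying the formal manipulations, namely that $e^Ae^B=e^Be^Ae^{[A,B]}$ holds for these infinite sums. I will handle this by truncating $A$ and $B$ in degree. Each coefficient of a fixed monomial $z^{-i}w^{j}$, acting on any fixed polynomial in $\Gamma$, involves only finitely many $a_n$. So the identity reduces to the finite-dimensional Heisenberg case, where the BCH formula with a central commutator is standard. The result is naturally valid as an expansion in $|w|<|z|$, which is the stated power-series expansion in $w/z$.
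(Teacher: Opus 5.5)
Your proposal is correct and follows essentially the same route as the paper: you split $\Yop$ into its creation and annihilation exponentials, observe that the $1-t^n$ factors cancel so the commutator is the scalar $-2\sum_{n\ \mathrm{odd}}\frac{1}{n}(w/z)^n=\log\frac{1-w/z}{1+w/z}$, and apply the central-commutator BCH identity. Your truncation argument justifying the formal manipulations is a detail the paper leaves implicit.
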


\begin{proof}
Write $\Yop(z;t)=e^{A(z)}e^{B(z)}$, where
\[
A(z)=\sum_{\substack{n\geq 1\\n\,\mathrm{odd}}}
\frac{2(1-t^n)}{n}a_{-n}z^n,
\qquad
B(z)=-\sum_{\substack{n\geq 1\\n\,\mathrm{odd}}}
\frac{2}{n(1-t^n)}a_nz^{-n}.
\]
By \eqref{eq:odd-Heis-rel},
\begin{align*}
[B(z),A(w)]
&=-2\sum_{\substack{n\geq 1\\n\,\mathrm{odd}}}
\frac{1}{n}\left(\frac{w}{z}\right)^n\\
&=\log\frac{1-w/z}{1+w/z}.
\end{align*}
The commutator is scalar, so the Baker--Campbell--Hausdorff formula
gives \eqref{eq:OPE}.
\end{proof}

Let
\[
\delta(u)=\sum_{k\in\mathbb Z}u^k
\]
be the formal delta function. Since only odd Heisenberg modes occur in
\eqref{eq:odd-GJZ},
\begin{equation}\label{eq:normal-at-minus}
:\!\Yop(z;t)\Yop(-z;t)\!:=1.
\end{equation}
We also use the formal identity
\begin{equation}\label{eq:delta-rational}
\iota_{z,w}\frac{1-w/z}{1+w/z}
+
\iota_{w,z}\frac{1-z/w}{1+z/w}
=2\delta(-w/z),
\end{equation}
where $\iota_{z,w}$ and $\iota_{w,z}$ denote the two formal expansions.

\begin{corollary}\label{cor:Clifford}
The operator $\Yop(z;t)$ satisfies
\begin{equation}\label{eq:field-Clifford}
\Yop(z;t)\Yop(w;t)+\Yop(w;t)\Yop(z;t)
=2\delta(-w/z).
\end{equation}
Equivalently,
\begin{equation}\label{eq:mode-Clifford}
\Yop_m(t)\Yop_n(t)+\Yop_n(t)\Yop_m(t)
=2(-1)^m\delta_{m,-n}.
\end{equation}
\end{corollary}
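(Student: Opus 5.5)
The plan is to add the two orderings of the operator product expansion from \cref{prop:OPE}. Applying \eqref{eq:OPE} to $\Yop(z;t)\Yop(w;t)$ and, with $z$ and $w$ swapped, to $\Yop(w;t)\Yop(z;t)$ gives
\[
\Yop(z;t)\Yop(w;t)=:\!\Yop(z;t)\Yop(w;t)\!:\,\iota_{z,w}\frac{1-w/z}{1+w/z},
\qquad
\Yop(w;t)\Yop(z;t)=:\!\Yop(w;t)\Yop(z;t)\!:\,\iota_{w,z}\frac{1-z/w}{1+z/w}.
\]
The normal-ordered product is symmetric in $z$ and $w$, because creation operators commute among themselves and so do annihilation operators. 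So $:\!\Yop(z;t)\Yop(w;t)\!:=:\!\Yop(w;t)\Yop(z;t)\!:$. Adding the two lines and using \eqref{eq:delta-rational} yields
\[
\Yop(z;t)\Yop(w;t)+\Yop(w;t)\Yop(z;t)=2\,:\!\Yop(z;t)\Yop(w;t)\!:\,\delta(-w/z).
\]

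Next I apply the substitution property of the formal delta function: $f(z,w)\delta(-w/z)=f(z,-z)\delta(-w/z)$. This lets me set $w=-z$ inside the normal-ordered product. By \eqref{eq:normal-at-minus}, $:\!\Yop(z;t)\Yop(-z;t)\!:=1$, which gives \eqref{eq:field-Clifford}.

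This substitution is the one step that needs care, because $:\!\Yop(z;t)\Yop(w;t)\!:$ is a doubly infinite series in both variables. The way to handle it is to apply everything to a fixed vector $v\in\Gamma$. Only finitely many annihilation terms act nontrivially on $v$, so $:\!\Yop(z;t)\Yop(w;t)\!:v$ is a power series in $z,w$ times a polynomial in $z^{-1},w^{-1}$. Each graded component is therefore a Laurent polynomial in each variable. For Laurent polynomials $f$, the identity $f(z,w)\delta(-w/z)=f(z,-z)\delta(-w/z)$ holds termwise, since $w^j\delta(-w/z)=(-z)^j\delta(-w/z)$.

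Finally, I extract coefficients to get \eqref{eq:mode-Clifford}. The coefficient of $z^{-m}w^{-n}$ on the left is $\Yop_m(t)\Yop_n(t)+\Yop_n(t)\Yop_m(t)$. On the right we have $2\delta(-w/z)=2\sum_k(-1)^k w^k z^{-k}$. Matching $w^{-n}z^{-m}$ forces $k=-n$ and $k=m$, so the coefficient is $2(-1)^m\delta_{m,-n}$. The converse direction follows by summing the mode relations, so the two forms are equivalent.
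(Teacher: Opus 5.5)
Your proof is correct and follows the paper's argument step for step. You add the two expansions from \cref{prop:OPE}, using the symmetry of the normal-ordered product and \eqref{eq:delta-rational}; you then set $w=-z$ via the delta function and \eqref{eq:normal-at-minus}, and compare coefficients of $z^{-m}w^{-n}$. Your extra justification of $F(z,w)\delta(-w/z)=F(z,-z)\delta(-w/z)$, by acting on a fixed $v\in\Gamma$ and working one graded component at a time, adds rigor at a point the paper asserts for arbitrary formal series without comment.
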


\begin{proof}
Adding the two expansions in
\cref{prop:OPE} and using \eqref{eq:delta-rational} gives
\[
2:\!\Yop(z;t)\Yop(w;t)\!:\,\delta(-w/z).
\]
For every formal series $F(z,w)$,
$F(z,w)\delta(-w/z)=F(z,-z)\delta(-w/z)$. Hence
\eqref{eq:normal-at-minus} gives \eqref{eq:field-Clifford}. Comparing
the coefficient of $z^{-m}w^{-n}$ gives \eqref{eq:mode-Clifford}.
\end{proof}

In particular, if $r,s>0$, then
\begin{equation}\label{eq:negative-anticommute}
\Yop_{-r}(t)\Yop_{-s}(t)
=-\Yop_{-s}(t)\Yop_{-r}(t),
\qquad
\Yop_{-r}(t)^2=0.
\end{equation}
This explains the use of strict partitions.

\subsection{The functions and their Pfaffian formula}

Acting on the vacuum gives the one-row series
\begin{align}
\Ht_t(X;z)
&:=\Yop(z;t)\vac\notag\\
&=\exp\left(
2\sum_{\substack{n\geq 1\\n\,\mathrm{odd}}}
\frac{1-t^n}{n}p_n(X)z^n
\right)
=\sum_{r\geq 0}q_r(X;t)z^r.
\label{eq:Ht-def}
\end{align}
When $t=0$, we write $q_r(X)=q_r(X;0)$. The logarithmic expansion gives
\begin{equation}\label{eq:Ht-product}
\Ht_t(X;z)
=
\prod_{i\geq 1}
\frac{(1+x_i z)(1-tx_i z)}
     {(1-x_i z)(1+tx_i z)}.
\end{equation}

A strict partition is a finite sequence
\[
\lambda=(\lambda_1>\lambda_2>\cdots>\lambda_{\ellp}>0).
\]
We write $\SP$ for the set of strict partitions, including the empty
partition. When a condition involves the largest part of the empty
partition, we use the convention $\lambda_1=0$.

\begin{definition}\label{def:shifted-t-Schur}
For $\lambda\in\SP$, define
\begin{equation}\label{eq:shifted-t-Schur-def}
\cQ_\lambda(X;t)
=
\Yop_{-\lambda_1}(t)\Yop_{-\lambda_2}(t)
\cdots\Yop_{-\lambda_{\ellp}}(t)\vac.
\end{equation}
We call this the \emph{shifted $t$-Schur function} attached to $\lambda$,
and we set $\cQ_{\varnothing}(X;t)=1$.
\end{definition}

Let
\begin{equation}\label{eq:standard-neutral-field}
\Phi(z):=\Yop(z;0)=\sum_{m\in\mathbb Z}\phi_mz^{-m}.
\end{equation}
The usual operator construction gives
\begin{equation}\label{eq:Q-state-early}
Q_\lambda
=
\phi_{-\lambda_1}\cdots\phi_{-\lambda_{\ellp}}\vac.
\end{equation}

\begin{remark}\label{rem:t1}
The operator $\Yop(z;t)$ is written over $\mathbb Q(t)$ and should not
be specialized directly at $t=1$. 
\end{remark}

\begin{proposition}\label{prop:multipoint}
For $k\geq 1$,
\begin{equation}\label{eq:multipoint}
\Yop(z_1;t)\cdots\Yop(z_k;t)\vac
=
\prod_{1\leq i<j\leq k}
\frac{z_i-z_j}{z_i+z_j}
\prod_{i=1}^k\Ht_t(X;z_i),
\end{equation}
where every factor is expanded in the formal region
$|z_1|>|z_2|>\cdots>|z_k|$. Consequently,
\begin{equation}\label{eq:coefficient-shifted}
\cQ_\lambda(X;t)
=
[z_1^{\lambda_1}\cdots z_k^{\lambda_k}]
\prod_{1\leq i<j\leq k}
\frac{z_i-z_j}{z_i+z_j}
\prod_{i=1}^k\Ht_t(X;z_i).
\end{equation}
\end{proposition}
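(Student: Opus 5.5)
We prove \eqref{eq:multipoint} by normal ordering the product of the $k$ vertex operators, in the same way as in \cref{prop:OPE}. Write $\Yop(z_i;t)=e^{A(z_i)}e^{B(z_i)}$ as in the proof of \cref{prop:OPE}. The commutator $[B(z_i),A(z_j)]$ is a scalar. Hence for $i<j$ we may move $e^{B(z_i)}$ to the right past $e^{A(z_j)}$ using
\[
e^{B(z_i)}e^{A(z_j)}=e^{A(z_j)}e^{B(z_i)}\,e^{[B(z_i),A(z_j)]}
=e^{A(z_j)}e^{B(z_i)}\,\frac{1-z_j/z_i}{1+z_j/z_i}.
\]
The factor is expanded in $z_j/z_i$, which is legitimate in the region $|z_i|>|z_j|$. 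Iterating, or arguing by induction on $k$, gives
\[
\Yop(z_1;t)\cdots\Yop(z_k;t)
=\prod_{1\le i<j\le k}\frac{z_i-z_j}{z_i+z_j}\;
e^{A(z_1)+\cdots+A(z_k)}\,e^{B(z_1)+\cdots+B(z_k)}.
\]
Here we used that the $A$'s commute with one another, that the $B$'s commute with one another, and that $(1-z_j/z_i)/(1+z_j/z_i)=(z_i-z_j)/(z_i+z_j)$.

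For the induction step, assume the formula holds for $k-1$ operators. Multiply on the left by $e^{A(z_1)}e^{B(z_1)}$. Then push $e^{B(z_1)}$ through $e^{A(z_2)+\cdots+A(z_k)}$. This produces the scalar factors $\prod_{j>1}e^{[B(z_1),A(z_j)]}$, since a scalar commutator gives a multiplicative factor. The only step needing care is that these formal expansions are compatible. Each factor is a power series in $z_j/z_i$ with $i<j$. A product of such series is well defined as a formal series in the region $|z_1|>\cdots>|z_k|$, and on the vacuum only finitely many terms contribute to each coefficient of the $p$-variables. I expect this bookkeeping to be the only mild obstacle.

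Next apply the normal-ordered product to $\vac$. Since $a_n\vac=0$ for $n>0$, we have $e^{B(z)}\vac=\vac$. By \eqref{eq:Ht-def}, $e^{A(z_i)}\vac=\Ht_t(X;z_i)$, and these factors multiply because the $A(z_i)$ commute. This gives \eqref{eq:multipoint}.

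For \eqref{eq:coefficient-shifted}, expand the left side as $\sum_{m_1,\dots,m_k}\Yop_{m_1}(t)\cdots\Yop_{m_k}(t)\vac\,z_1^{-m_1}\cdots z_k^{-m_k}$. The coefficient of $z_1^{\lambda_1}\cdots z_k^{\lambda_k}$ is $\Yop_{-\lambda_1}(t)\cdots\Yop_{-\lambda_k}(t)\vac$, where $k=\ellp(\lambda)$. By \cref{def:shifted-t-Schur} this is $\cQ_\lambda(X;t)$. Extracting this coefficient from the right side of \eqref{eq:multipoint} gives the claim.
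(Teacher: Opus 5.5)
Your proposal is correct and follows essentially the same route as the paper. You normal-order the product using the scalar commutator $[B(z_i),A(z_j)]=\log\frac{1-z_j/z_i}{1+z_j/z_i}$ from \cref{prop:OPE}, let the annihilation part act trivially on $\vac$ while the creation part produces $\prod_i\Ht_t(X;z_i)$, and then extract coefficients. The only difference is that you spell out the induction and the well-definedness of the formal expansions, which the paper's "repeated use of \eqref{eq:OPE}" leaves implicit.
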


\begin{proof}
Repeated use of \eqref{eq:OPE} gives the rational factor in
\eqref{eq:multipoint}. The annihilation part of the normal-ordered
product acts trivially on $\vac$, while the creation part gives the
product of one-row series. Coefficient extraction proves
\eqref{eq:coefficient-shifted}.
\end{proof}

For $r>s\geq 0$, define the  two-row function
\begin{equation}\label{eq:two-row-def}
\cQ_{(r,s)}(X;t)
=
\Yop_{-r}(t)\Yop_{-s}(t)\vac.
\end{equation}
We extend this notation by skew-symmetry and set
$\cQ_{(0,0)}(X;t)=0$. In particular,
\begin{equation}\label{eq:r0}
\cQ_{(r,0)}(X;t)=q_r(X;t).
\end{equation}
Since
\[
\frac{1-u}{1+u}=1+2\sum_{k\geq 1}(-1)^ku^k,
\]
\cref{prop:multipoint} gives
\begin{equation}\label{eq:two-row-formula}
\cQ_{(r,s)}(X;t)
=
q_r(X;t)q_s(X;t)
+2\sum_{k=1}^{s}(-1)^k
q_{r+k}(X;t)q_{s-k}(X;t)
\end{equation}
for $r>s\geq 0$.

\begin{theorem}
\label{thm:Pfaffian-Giambelli}
Let $\lambda=(\lambda_1>\cdots>\lambda_{\ellp}>0)$ be a strict
partition. If $\ellp$ is odd, append a zero part. If $2m$ is the
resulting even length, then
\begin{equation}\label{eq:Pfaffian-Giambelli}
\cQ_\lambda(X;t)
=
\Pf\left(
\cQ_{(\lambda_i,\lambda_j)}(X;t)
\right)_{1\leq i,j\leq 2m}.
\end{equation}
\end{theorem}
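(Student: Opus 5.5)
The plan is to reduce the statement to the classical Schur Pfaffian identity applied to the rational prefactor in \cref{prop:multipoint}, and then extract coefficients. Work with $k=2m$ variables. If $\ellp$ is even we take $k=\ellp$. If $\ellp$ is odd we take $k=\ellp+1$ and $\lambda_{2m}=0$. In the odd case we first check that appending a zero part does not change $\cQ_\lambda$. The mode $\Yop_0(t)$ acts on $\vac$ as the identity: the annihilation factor kills everything except the constant term, and the creation factor contributes only nonnegative powers of $z$. This is the same computation that gives \eqref{eq:r0}. Hence $\cQ_\lambda(X;t)=\Yop_{-\lambda_1}(t)\cdots\Yop_{-\lambda_{\ellp}}(t)\Yop_0(t)\vac$, and \eqref{eq:coefficient-shifted} applies with $k=2m$ and exponent $0$ in the last variable.

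The key input is Schur's Pfaffian identity
\[
\prod_{1\le i<j\le 2m}\frac{z_i-z_j}{z_i+z_j}=\Pf\left(\frac{z_i-z_j}{z_i+z_j}\right)_{1\le i,j\le 2m}.
\]
This is an identity of rational functions. Expanding both sides in the region $|z_1|>\cdots>|z_{2m}|$ gives an identity of formal series. Each entry $(z_i-z_j)/(z_i+z_j)$ with $i<j$ is expanded in $z_j/z_i$, and the Pfaffian is a polynomial in its upper-triangular entries, so the expansions are compatible.

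Next, multiply by $\prod_i \Ht_t(X;z_i)$. Each term of the Pfaffian expansion is a product over a perfect matching of $\{1,\dots,2m\}$. Every index occurs in exactly one pair, so we may distribute the factors and write
\[
\prod_{i<j}\frac{z_i-z_j}{z_i+z_j}\prod_{i}\Ht_t(X;z_i)=\Pf\left(\frac{z_i-z_j}{z_i+z_j}\Ht_t(X;z_i)\Ht_t(X;z_j)\right).
\]
The variables in distinct pairs are disjoint, so the coefficient of $z_1^{\lambda_1}\cdots z_{2m}^{\lambda_{2m}}$ in each matching term factors as a product over pairs. For a pair $i<j$, the coefficient of $z_i^{\lambda_i}z_j^{\lambda_j}$ in $\frac{z_i-z_j}{z_i+z_j}\Ht_t(X;z_i)\Ht_t(X;z_j)$ equals $\cQ_{(\lambda_i,\lambda_j)}(X;t)$. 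This follows from the $k=2$ case of \eqref{eq:coefficient-shifted}, equivalently from the derivation of \eqref{eq:two-row-formula}, and it covers the case $\lambda_j=0$ through \eqref{eq:r0}. The signs of the matchings are unchanged by this extraction, so the Pfaffian structure is preserved and we obtain \eqref{eq:Pfaffian-Giambelli}.

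The main obstacle is justifying Schur's Pfaffian identity. I would cite it as classical: it is the $t=0$ shadow of the same argument, and it is the standard source of the Pfaffian formula for $Q_\lambda$. If a self-contained argument is wanted, I would use induction on $m$. Expand the Pfaffian along the first row. Then show that the difference of the two sides is a rational function with denominator $\prod_{i<j}(z_i+z_j)$, that it is antisymmetric in the variables, and that its numerator has degree too low to be nonzero. A second point needing care is the compatibility of the formal expansions in the region $|z_1|>\cdots>|z_{2m}|$. This holds because the identity only involves pairs $i<j$ expanded in $z_j/z_i$.
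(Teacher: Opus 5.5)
Your proposal is correct and is essentially the paper's proof: you use Schur's Pfaffian identity, multiply the $i$th row and column by $\Ht_t(X;z_i)$, extract the coefficient of $z_1^{\lambda_1}\cdots z_{2m}^{\lambda_{2m}}$ matching by matching, and use $\Yop_0(t)\vac=\vac$ when $\ellp$ is odd. Your remarks on the compatibility of the expansions in $|z_1|>\cdots>|z_{2m}|$ fill in a point the paper leaves implicit. The one loose spot is your optional sketch of Schur's identity: after clearing denominators, the antisymmetric difference has degree exactly $\binom{2m}{2}$, so you must pin down a constant multiple of the Vandermonde (e.g.\ via residues in $z_1$ and induction) rather than argue that the degree is too low. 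This does not affect the proof, since the paper also simply quotes that identity.
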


\begin{proof}
Assume first that $\ellp=2m$. Schur's Pfaffian identity is
\begin{equation}\label{eq:Schur-Pf}
\prod_{1\leq i<j\leq 2m}
\frac{z_i-z_j}{z_i+z_j}
=
\Pf\left(
\frac{z_i-z_j}{z_i+z_j}
\right)_{1\leq i,j\leq 2m}.
\end{equation}
Multiplying the $i$th row and column by $\Ht_t(X;z_i)$ gives
\begin{align*}
&\prod_{1\leq i<j\leq 2m}
\frac{z_i-z_j}{z_i+z_j}
\prod_{i=1}^{2m}\Ht_t(X;z_i)\\
&\qquad=
\Pf\left(
\frac{z_i-z_j}{z_i+z_j}
\Ht_t(X;z_i)\Ht_t(X;z_j)
\right)_{1\leq i,j\leq 2m}.
\end{align*}
Extracting the coefficient of
$z_1^{\lambda_1}\cdots z_{2m}^{\lambda_{2m}}$ and using
multilinearity gives \eqref{eq:Pfaffian-Giambelli}. If $\ellp$ is odd,
then $\Yop_0(t)\vac=\vac$, so we append a zero part and apply the even
case.
\end{proof}

\section{The shifted \texorpdfstring{$t$}{t}-Cauchy identity}
\label{sec:Cauchy}

An odd partition is a partition all of whose parts are odd. We write
$\OP$ for the set of odd partitions. For $\alpha\in\OP$, let
$m_r(\alpha)$ be the multiplicity of the part $r$ in $\alpha$, and set
\[
z_\alpha
=
\prod_{r\geq 1}r^{m_r(\alpha)}m_r(\alpha)!.
\]
The standard scalar product on $\Gamma$ is
\begin{equation}\label{eq:Q-inner-product}
\langle p_\alpha,p_\beta\rangle
=
2^{-\ellp(\alpha)}z_\alpha\delta_{\alpha\beta},
\qquad
\alpha,\beta\in\OP.
\end{equation}
With respect to this scalar product, $a_n^*=a_{-n}$.

For the field in \eqref{eq:standard-neutral-field},
\cref{cor:Clifford} gives
\begin{equation}\label{eq:standard-Clifford}
\phi_m\phi_n+\phi_n\phi_m
=
2(-1)^m\delta_{m,-n}.
\end{equation}
Moreover,
\begin{equation}\label{eq:adjoint-neutral}
\Phi(z)^*=\Phi(-z^{-1}),
\qquad
\phi_m^*=(-1)^m\phi_{-m}.
\end{equation}
The Clifford relations imply
\begin{equation}\label{eq:Q-orthogonal}
\langle Q_\lambda,Q_\mu\rangle
=
2^{\ellp(\lambda)}\delta_{\lambda\mu}.
\end{equation}
We write
\begin{equation}\label{eq:P-def}
P_\lambda=2^{-\ellp(\lambda)}Q_\lambda.
\end{equation}
Thus the $P$- and $Q$-bases are dual.

For an alphabet $X$, define the half-vertex operator
\begin{equation}\label{eq:Gamma-plus-t}
\Gamma_{+,t}(X)
=
\exp\left(
\sum_{\substack{n\geq 1\\n\,\mathrm{odd}}}
\frac{2(1-t^n)}{n}p_n(X)a_n
\right).
\end{equation}
Since $[a_n,\Phi(z)]=z^n\Phi(z)$, we have
\begin{equation}\label{eq:Gamma-conjugation}
\Gamma_{+,t}(X)\Phi(z)
=
\Ht_t(X;z)\Phi(z)\Gamma_{+,t}(X).
\end{equation}

\begin{proposition}
\label{prop:matrix-element}
For every strict partition $\lambda$,
\begin{equation}\label{eq:matrix-element}
\cQ_\lambda(X;t)
=
\left\langle
\vac,
\Gamma_{+,t}(X)
\phi_{-\lambda_1}\cdots\phi_{-\lambda_{\ellp}}
\vac
\right\rangle.
\end{equation}
\end{proposition}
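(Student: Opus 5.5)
We move $\Gamma_{+,t}(X)$ to the right through the string of modes and then compare coefficients with \cref{prop:multipoint}. We work with generating series first and extract modes at the end.

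\textbf{Step 1: push $\Gamma_{+,t}(X)$ to the vacuum.} Apply \eqref{eq:Gamma-conjugation} $k=\ellp(\lambda)$ times to get
\[
\Gamma_{+,t}(X)\Phi(z_1)\cdots\Phi(z_k)\vac
=\prod_{i=1}^k\Ht_t(X;z_i)\,\Phi(z_1)\cdots\Phi(z_k)\Gamma_{+,t}(X)\vac .
\]
The operator $\Gamma_{+,t}(X)$ is an exponential in the annihilation operators $a_n$ with $n>0$. Hence it fixes the vacuum, $\Gamma_{+,t}(X)\vac=\vac$.

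\textbf{Step 2: pair with the vacuum.} Apply \cref{prop:multipoint} at $t=0$, which gives
\[
\Phi(z_1)\cdots\Phi(z_k)\vac=\prod_{i<j}\frac{z_i-z_j}{z_i+z_j}\prod_i \Ht_0(X;z_i).
\]
The pairing $\langle\vac,F\rangle$ picks out the constant term of $F$ in the $p_n$. The constant term of $\Ht_0(X;z_i)$ is $1$, so
\[
\langle\vac,\Phi(z_1)\cdots\Phi(z_k)\vac\rangle=\prod_{i<j}\frac{z_i-z_j}{z_i+z_j},
\]
expanded in the region $|z_1|>\cdots>|z_k|$. Combining this with Step 1 gives
\[
\langle\vac,\Gamma_{+,t}(X)\Phi(z_1)\cdots\Phi(z_k)\vac\rangle
=\prod_{i<j}\frac{z_i-z_j}{z_i+z_j}\prod_i\Ht_t(X;z_i).
\]
By \eqref{eq:multipoint}, the right side equals $\Yop(z_1;t)\cdots\Yop(z_k;t)\vac$.

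\textbf{Step 3: extract modes.} The mode $\phi_{-\lambda_i}$ is the coefficient of $z_i^{\lambda_i}$ in $\Phi(z_i)$. Taking the coefficient of $z_1^{\lambda_1}\cdots z_k^{\lambda_k}$ on both sides therefore gives \eqref{eq:matrix-element}, using \eqref{eq:coefficient-shifted}.

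\textbf{Main obstacle.} The delicate point is justifying \eqref{eq:Gamma-conjugation} and the pairing as formal identities. The operator $\Gamma_{+,t}(X)$ acts on $\Gamma$ with coefficients in the ring of symmetric functions in $X$; we treat the pairing as $\Lambda_X$-linear, so that $\Ht_t(X;z_i)$ is a scalar and factors out. The coefficient extraction must also commute with the pairing. This holds because for fixed degrees only finitely many terms contribute: any given coefficient of $\Phi(z_1)\cdots\Phi(z_k)\vac$ is a finite sum of monomials in the $p_n$. The relation $[a_n,\Phi(z)]=z^n\Phi(z)$ is stated in the paper, and \eqref{eq:Gamma-conjugation} follows from it by exponentiation, since the commutator is a scalar multiple of $\Phi(z)$.
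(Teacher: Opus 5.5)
Your proposal is correct and follows the paper's proof essentially step for step. You conjugate $\Gamma_{+,t}(X)$ past the fields via \eqref{eq:Gamma-conjugation} to pull out $\prod_i\Ht_t(X;z_i)$, use $\Gamma_{+,t}(X)\vac=\vac$ and the $t=0$ vacuum expectation $\prod_{i<j}\frac{z_i-z_j}{z_i+z_j}$, and compare coefficients with \eqref{eq:coefficient-shifted}. The only point to keep straight is that in your Step 2 the $X$ in $\Ht_0(X;z_i)$ stands for the Fock-space variables, not the external alphabet of $\Gamma_{+,t}(X)$; the paper overloads the notation in the same way.
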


\begin{proof}
Using \eqref{eq:Gamma-conjugation}, the ordinary neutral OPE, and the
vacuum conditions, we obtain
\begin{align*}
&\left\langle
\vac,
\Gamma_{+,t}(X)
\Phi(z_1)\cdots\Phi(z_k)
\vac
\right\rangle\\
&\qquad=
\prod_{i=1}^k\Ht_t(X;z_i)
\left\langle
\vac,
\Phi(z_1)\cdots\Phi(z_k)
\vac
\right\rangle\\
&\qquad=
\prod_{1\leq i<j\leq k}
\frac{z_i-z_j}{z_i+z_j}
\prod_{i=1}^k\Ht_t(X;z_i).
\end{align*}
This is the same generating function as \eqref{eq:multipoint}.
Comparing coefficients gives \eqref{eq:matrix-element}.
\end{proof}

Define
\begin{equation}\label{eq:Gamma-minus}
\Gamma_-(Y)
=
\exp\left(
\sum_{\substack{n\geq 1\\n\,\mathrm{odd}}}
\frac{2}{n}p_n(Y)a_{-n}
\right).
\end{equation}

\begin{lemma}\label{lem:reproducing}
For every $f\in\Gamma$,
\begin{equation}\label{eq:reproducing}
\left\langle f,\Gamma_-(Y)\vac\right\rangle=f(Y).
\end{equation}
Consequently,
\begin{equation}\label{eq:Gamma-minus-expansion}
\Gamma_-(Y)\vac
=
\sum_{\lambda\in\SP}P_\lambda(Y)Q_\lambda.
\end{equation}
\end{lemma}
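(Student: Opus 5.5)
We prove \eqref{eq:reproducing} on the power-sum basis $\{p_\alpha\}_{\alpha\in\OP}$ and then obtain \eqref{eq:Gamma-minus-expansion} by expanding in the dual pair of bases $\{P_\lambda\},\{Q_\lambda\}$.

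First, since $a_{-n}=p_n$, the operator $\Gamma_-(Y)$ acts on $\vac=1$ as multiplication by a scalar series:
\[
\Gamma_-(Y)\vac
=
\exp\left(\sum_{\substack{n\geq1\\n\,\mathrm{odd}}}\frac{2}{n}p_n(Y)p_n(X)\right)
=
\prod_{\substack{n\geq1\\n\,\mathrm{odd}}}\sum_{m\geq 0}\frac{2^m p_n(Y)^m p_n(X)^m}{n^m m!}
=
\sum_{\alpha\in\OP}\frac{2^{\ellp(\alpha)}}{z_\alpha}p_\alpha(Y)p_\alpha(X).
\]
This is the odd analogue of the usual Cauchy kernel expansion. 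Pairing with $f=p_\beta$ and using \eqref{eq:Q-inner-product} gives
\[
\langle p_\beta,\Gamma_-(Y)\vac\rangle
=\frac{2^{\ellp(\beta)}}{z_\beta}p_\beta(Y)\cdot 2^{-\ellp(\beta)}z_\beta
=p_\beta(Y).
\]
The scalar product is extended $Y$-linearly in the second slot, so the pairing is applied coefficientwise in $Y$. Since $\{p_\beta\}_{\beta\in\OP}$ spans $\Gamma$ and both sides of \eqref{eq:reproducing} are linear in $f$, the identity holds for all $f\in\Gamma$. Alternatively, one can note $\langle f,\Gamma_-(Y)\vac\rangle=\langle \Gamma_-(Y)^*f,\vac\rangle$, where $\Gamma_-(Y)^*=\exp(\sum \frac2n p_n(Y)a_n)$ acts as the translation $p_n\mapsto p_n+p_n(Y)$ (because $\frac{2}{n}a_n=\partial/\partial p_n$). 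Pairing with $\vac$ then extracts the constant term, which is $f(Y)$. Either route is routine.

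For \eqref{eq:Gamma-minus-expansion}, we first argue that $\{Q_\lambda\}_{\lambda\in\SP}$ is a basis of $\Gamma$. By \eqref{eq:Q-orthogonal} the $Q_\lambda$ are linearly independent, and they are homogeneous of degree $|\lambda|$. The number of strict partitions of $d$ equals the number of odd partitions of $d$, which is $\dim\Gamma_d$. Hence $\{Q_\lambda\}$ is a basis, and by \eqref{eq:P-def} the basis $\{P_\lambda\}$ is dual to it.

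Now expand $\Gamma_-(Y)\vac=\sum_\lambda c_\lambda(Y)Q_\lambda$ degreewise. Pairing with $P_\mu$ and using duality gives $c_\mu(Y)=\langle P_\mu,\Gamma_-(Y)\vac\rangle$, which equals $P_\mu(Y)$ by \eqref{eq:reproducing}. The only potential obstacle is this basis/dimension count. It is classical (Euler's partition identity), so I would simply cite it, or note that completeness of $\{Q_\lambda\}$ is part of the standard operator construction \eqref{eq:Q-state-early}.
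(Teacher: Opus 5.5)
Your proof is correct. The paper states this lemma without proof and treats it as standard, so there is no argument to compare against; your proposal fills that gap.

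\textbf{First part.} Both of your routes to \eqref{eq:reproducing} work.
\begin{itemize}
\item The power-sum expansion $\Gamma_-(Y)\vac=\sum_{\alpha\in\OP}2^{\ellp(\alpha)}z_\alpha^{-1}p_\alpha(Y)p_\alpha$ is exactly the reproducing kernel for the scalar product \eqref{eq:Q-inner-product}. Pairing it with $p_\beta$ is a one-line check.
\item The adjoint route uses $a_n^*=a_{-n}$, which the paper states, together with $\tfrac{2}{n}a_n=\partial/\partial p_n$. This identifies $\Gamma_-(Y)^*$ with the shift $p_n\mapsto p_n+p_n(Y)$, and pairing with $\vac$ then extracts the constant term. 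This route fits the paper's operator formalism most naturally.
\end{itemize}

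\textbf{Second part.} You correctly see that \eqref{eq:Gamma-minus-expansion} needs $\{Q_\lambda\}$ to span $\Gamma$, not just to be orthogonal. The paper's phrase ``Thus the $P$- and $Q$-bases are dual'' tacitly assumes this. Your degree-wise count makes it explicit:
\begin{itemize}
\item \eqref{eq:Q-orthogonal} gives linear independence;
\item homogeneity of degree $|\lambda|$ plus Euler's identity between strict and odd partitions gives $\dim\Gamma_d=\#\{\lambda\in\SP:|\lambda|=d\}$.
\end{itemize}
Extracting coefficients by pairing with $P_\mu$ then finishes the argument.

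The only point to keep in view is that $\Gamma_-(Y)\vac$ lives in a degree-wise completion. Each fixed $X$-degree component is a finite sum, so every pairing and coefficient comparison you make is a finite computation, as you indicate.
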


\begin{theorem}\label{thm:shifted-Cauchy}
For two alphabets $X=(x_1,x_2,\ldots)$ and
$Y=(y_1,y_2,\ldots)$,
\begin{align}
\sum_{\lambda\in\SP}
\cQ_\lambda(X;t)P_\lambda(Y)
&=
\exp\left(
2\sum_{\substack{n\geq 1\\n\,\mathrm{odd}}}
\frac{1-t^n}{n}p_n(X)p_n(Y)
\right)
\label{eq:shifted-Cauchy-exp}\\
&=
\prod_{i,j\geq 1}
\frac{(1+x_i y_j)(1-tx_i y_j)}
     {(1-x_i y_j)(1+tx_i y_j)}.
\label{eq:shifted-Cauchy-product}
\end{align}
\end{theorem}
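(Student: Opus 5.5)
The plan is to combine \cref{prop:matrix-element} with \cref{lem:reproducing}. By \cref{prop:matrix-element}, $\cQ_\lambda(X;t)=\langle \vac,\Gamma_{+,t}(X)Q_\lambda\rangle$, since $Q_\lambda=\phi_{-\lambda_1}\cdots\phi_{-\lambda_\ellp}\vac$ by \eqref{eq:Q-state-early}. Multiplying by $P_\lambda(Y)$ and summing over $\lambda\in\SP$, the expansion \eqref{eq:Gamma-minus-expansion} collapses the sum to
\[
\sum_{\lambda\in\SP}\cQ_\lambda(X;t)P_\lambda(Y)
=\left\langle \vac,\Gamma_{+,t}(X)\Gamma_-(Y)\vac\right\rangle .
\]
This step needs only linearity of the pairing in the second slot, coefficientwise in the graded components. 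Each graded piece involves finitely many $\lambda$, so there are no convergence issues.

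Next I evaluate this vacuum expectation by commuting $\Gamma_{+,t}(X)$ past $\Gamma_-(Y)$. Write $\Gamma_{+,t}(X)=e^{C}$ with $C=\sum_{n\ \mathrm{odd}}\frac{2(1-t^n)}{n}p_n(X)a_n$, and $\Gamma_-(Y)=e^{D}$ with $D=\sum_{n\ \mathrm{odd}}\frac{2}{n}p_n(Y)a_{-n}$. By \eqref{eq:odd-Heis-rel}, $[a_n,a_{-n}]=n/2$, so
\[
[C,D]=\sum_{n\ \mathrm{odd}}\frac{2(1-t^n)}{n}\cdot\frac{2}{n}\cdot\frac{n}{2}\,p_n(X)p_n(Y)
=2\sum_{n\ \mathrm{odd}}\frac{1-t^n}{n}p_n(X)p_n(Y),
\]
which is a scalar. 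By Baker--Campbell--Hausdorff, $e^Ce^D=e^{[C,D]}e^De^C$. Now $e^C\vac=\vac$ because $a_n\vac=0$ for $n>0$. Also $\langle\vac,e^D f\rangle=\langle e^{D^*}\vac,f\rangle=\langle\vac,f\rangle$, since $D^*$ involves only the $a_n$ with $n>0$, using $a_n^*=a_{-n}$. Hence the expectation equals $e^{[C,D]}\langle\vac,\vac\rangle=e^{[C,D]}$, which is \eqref{eq:shifted-Cauchy-exp}.

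For \eqref{eq:shifted-Cauchy-product}, substitute $p_n(X)p_n(Y)=\sum_{i,j}(x_iy_j)^n$ and use the same logarithmic expansion that gives \eqref{eq:Ht-product}:
\[
2\sum_{n\ \mathrm{odd}}\frac{u^n}{n}=\log\frac{1+u}{1-u},\qquad
-2\sum_{n\ \mathrm{odd}}\frac{(tu)^n}{n}=\log\frac{1-tu}{1+tu},
\]
applied with $u=x_iy_j$. Equivalently, $\exp(\cdot)=\prod_j \Ht_t(X;y_j)$.

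The only delicate point is the first step. I need \cref{prop:matrix-element} to hold with $\langle\vac,\cdot\rangle$ read as the constant-term functional, with $p_n(X)$ treated as scalars extended from $\Gamma$ to coefficients in $X$. This matches how the earlier results are stated, so I do not expect a real obstacle; the rest is the standard scalar-commutator computation.
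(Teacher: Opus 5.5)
Your proposal is correct and follows the paper's proof essentially step for step. You rewrite the sum as $\langle\vac,\Gamma_{+,t}(X)\Gamma_-(Y)\vac\rangle$ via \cref{prop:matrix-element} and \cref{lem:reproducing}, evaluate it using the scalar commutator and Baker--Campbell--Hausdorff, and then pass to the product by the logarithmic expansion; your explicit adjoint argument for $\langle\vac,e^{D}\vac\rangle=1$ simply spells out what the paper calls ``the vacuum conditions.''
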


\begin{proof}
Consider
\begin{equation}\label{eq:Cauchy-correlator}
Z_t(X,Y)
=
\left\langle
\vac,
\Gamma_{+,t}(X)\Gamma_-(Y)\vac
\right\rangle.
\end{equation}
Let
\[
U
=
\sum_{\substack{n\geq 1\\n\,\mathrm{odd}}}
\frac{2(1-t^n)}{n}p_n(X)a_n,
\qquad
V
=
\sum_{\substack{n\geq 1\\n\,\mathrm{odd}}}
\frac{2}{n}p_n(Y)a_{-n}.
\]
By \eqref{eq:odd-Heis-rel},
\begin{equation}\label{eq:Cauchy-commutator}
[U,V]
=
2\sum_{\substack{n\geq 1\\n\,\mathrm{odd}}}
\frac{1-t^n}{n}p_n(X)p_n(Y).
\end{equation}
This is scalar. The Baker--Campbell--Hausdorff formula and the vacuum
conditions therefore give
\begin{equation}\label{eq:Cauchy-first-evaluation}
Z_t(X,Y)
=
\exp\left(
2\sum_{\substack{n\geq 1\\n\,\mathrm{odd}}}
\frac{1-t^n}{n}p_n(X)p_n(Y)
\right).
\end{equation}
On the other hand, \cref{lem:reproducing,prop:matrix-element} give
\begin{align*}
Z_t(X,Y)
&=
\sum_{\lambda\in\SP}
P_\lambda(Y)
\left\langle
\vac,\Gamma_{+,t}(X)Q_\lambda
\right\rangle\\
&=
\sum_{\lambda\in\SP}
\cQ_\lambda(X;t)P_\lambda(Y).
\end{align*}
This proves \eqref{eq:shifted-Cauchy-exp}.

Finally, for $u=x_i y_j$,
\begin{align*}
2\sum_{\substack{n\geq 1\\n\,\mathrm{odd}}}
\frac{1-t^n}{n}u^n
&=
\log\frac{1+u}{1-u}
-
\log\frac{1+tu}{1-tu}\\
&=
\log\frac{(1+u)(1-tu)}{(1-u)(1+tu)}.
\end{align*}
Taking the product over $i,j$ proves
\eqref{eq:shifted-Cauchy-product}.
\end{proof}

\section{The shifted \texorpdfstring{$t$}{t}-Gessel formula}
\label{sec:Gessel}

We now truncate the sum by the largest part. The finite identity below
is a special case of the Pfaffian minor-summation formula of Ishikawa
and Wakayama \cite{IshikawaWakayama}. We include a short proof because
the interlaced order is important for the signs.

\subsection{A finite Pfaffian summation rule}

Let $I=(i_1,\ldots,i_N)$ be a finite ordered set, and let
$A=(a_{rs})_{r,s\in I}$ and $B=(b_{rs})_{r,s\in I}$ be skew-symmetric
matrices. For an even subset $J\subseteq I$, let $A[J]$ and $B[J]$
denote the principal submatrices in the induced order. We use the
convention $\Pf(A[\varnothing])=1$.

The notation
\[
\Pfint
\begin{pmatrix}
-A&I_N\\
-I_N&B
\end{pmatrix}
\]
means that the rows and columns are read in the order
\[
u_{i_1},v_{i_1},u_{i_2},v_{i_2},\ldots,u_{i_N},v_{i_N},
\]
even though the displayed block matrix uses the order
\[
u_{i_1},\ldots,u_{i_N},v_{i_1},\ldots,v_{i_N}.
\]

\begin{lemma}\label{lem:finite-Wick}
With the notation above,
\begin{equation}\label{eq:finite-Wick}
\Pfint
\begin{pmatrix}
-A&I_N\\
-I_N&B
\end{pmatrix}
=
\sum_{\substack{J\subseteq I\\ |J|\,\mathrm{even}}}
\Pf(A[J])\Pf(B[J]).
\end{equation}
\end{lemma}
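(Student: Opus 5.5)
I would prove \eqref{eq:finite-Wick} by expanding the Pfaffian on the left with the perfect-matching formula and grouping the matchings according to the vertices $i$ for which $u_i$ is matched with $v_i$. Write $M$ for the $2N\times 2N$ skew-symmetric matrix in the interlaced order $u_{i_1},v_{i_1},\ldots,u_{i_N},v_{i_N}$. Its nonzero entries are $M(u_r,u_s)=-a_{rs}$, $M(v_r,v_s)=b_{rs}$, $M(u_r,v_r)=1$ and $M(v_r,u_r)=-1$. All other $u$--$v$ entries vanish because the off-diagonal blocks are $\pm I_N$. Hence every perfect matching contributing to $\Pf M$ pairs each $u_r$ either with its own $v_r$ or with another $u_s$, and each $v_r$ either with $u_r$ or with another $v_s$. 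Let $K$ be the set of $r$ with $u_r$ matched to $v_r$, and put $J=I\setminus K$. Then the remaining $u_r$ with $r\in J$ are perfectly matched among themselves, and so are the remaining $v_r$. In particular $|J|$ is even. So the matchings correspond bijectively to triples $(J,\sigma,\tau)$, where $\sigma$ and $\tau$ are perfect matchings of $J$: $\sigma$ for the $u$-vertices and $\tau$ for the $v$-vertices.

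The weight of such a matching is $\prod_{\{r,s\}\in\sigma}(-a_{rs})\cdot\prod_{\{r,s\}\in\tau}b_{rs}$ times $1$ for each pair $(u_r,v_r)$, $r\in K$, multiplied by the sign of the matching. Summed over $\sigma$ and $\tau$ for fixed $J$, the sign-weighted products should factor as $\Pf(A[J])\Pf(B[J])$, once the sign is shown to split accordingly.

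\textbf{The main obstacle is the sign computation.} I would handle it in two steps.

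First, I remove the pairs $(u_r,v_r)$ for $r\in K$. These are adjacent in the interlaced order, so deleting an adjacent pair $\{u_r,v_r\}$ from the order and from the matching does not change the crossing number. The sign therefore reduces to that of the matching of $u_{j_1},v_{j_1},\ldots,u_{j_{2k}},v_{j_{2k}}$, $J=\{j_1<\cdots<j_{2k}\}$, formed by $\sigma$ on the $u$'s and $\tau$ on the $v$'s.

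Second, I reorder this list to $u_{j_1},\ldots,u_{j_{2k}},v_{j_1},\ldots,v_{j_{2k}}$. Moving the $v$'s past the later $u$'s takes $\binom{2k}{2}=k(2k-1)$ transpositions, giving the sign $(-1)^{k(2k-1)}=(-1)^k$. In the block order the matching is a disjoint union of a matching on the $u$-block and one on the $v$-block, so its sign is $\operatorname{sgn}(\sigma)\operatorname{sgn}(\tau)$. The product of the $-a$ factors contributes $(-1)^k$, which cancels the reordering sign. Summing over $\sigma$ and $\tau$ then gives exactly $\Pf(A[J])\Pf(B[J])$. Summing over $J$ yields \eqref{eq:finite-Wick}, with $J=\varnothing$ giving the term $1$, consistent with the all-$(u_r,v_r)$ matching.

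I would double-check the transposition count and the deletion claim on $N=2$ directly, where both sides equal $1+a_{12}b_{12}$.
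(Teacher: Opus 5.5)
Your proposal is correct and is essentially the paper's proof in different language. The paper writes the Pfaffian as the top coefficient of a Grassmann exponential, factors out $\prod_r(1+\xi_r\eta_r)$, and gets the same decomposition over even $J$, with the sign $(-1)^s$ from $\Pf(-A[J])$ cancelling the $(-1)^s$ from reordering $\xi_J\eta_J$ into interlaced order. This matches your matching count: your crossing-number argument for deleting the adjacent $(u_r,v_r)$ pairs is the counterpart of the even monomials $\xi_r\eta_r$ commuting into place. Read $j_1<\cdots<j_{2k}$ as the given order on $I$, since that is the induced order used for $A[J]$ and $B[J]$; in the application $I_h=(h,\ldots,1,0)$ it is decreasing.
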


\begin{proof}
Introduce Grassmann variables $\xi_r,\eta_r$ in the interlaced order
$\xi_{i_1},\eta_{i_1},\ldots,\xi_{i_N},\eta_{i_N}$. The left-hand side
of \eqref{eq:finite-Wick} is the coefficient of
$\xi_{i_1}\eta_{i_1}\cdots\xi_{i_N}\eta_{i_N}$ in
\begin{equation}\label{eq:Grassmann-exp}
\exp\left(
-\frac12\xi^{T}A\xi
+\sum_{r\in I}\xi_r\eta_r
+\frac12\eta^{T}B\eta
\right).
\end{equation}
Expand the middle exponential as
\[
\prod_{r\in I}(1+\xi_r\eta_r).
\]
Choose the mixed factor for every $r\notin J$. The remaining variables
have indices in $J$, so $|J|$ must be even. If $|J|=2s$, the
$\xi$-quadratic part contributes
$\Pf(-A[J])=(-1)^s\Pf(A[J])$. Reordering the remaining $\xi$- and
$\eta$-variables into the interlaced order contributes the same sign
$(-1)^s$. The two signs cancel, and the $\eta$-quadratic part
contributes $\Pf(B[J])$. Summing over $J$ gives
\eqref{eq:finite-Wick}.
\end{proof}

\subsection{The truncated sum}

Fix $h\geq 1$ and use the ordered index set
\begin{equation}\label{eq:index-order}
I_h=(h,h-1,\ldots,1,0).
\end{equation}
Define the $(h+1)\times(h+1)$ skew-symmetric matrix
\begin{equation}\label{eq:A-matrix-def}
A_h^{(t)}(X)
=
\left(a_{rs}^{(t)}(X)\right)_{r,s\in I_h}
\end{equation}
by
\begin{equation}\label{eq:A-entries}
a_{rs}^{(t)}(X)
=
\begin{cases}
\cQ_{(r,s)}(X;t),&r>s\geq 1,\\
q_r(X;t),&r\geq 1,\ s=0,\\
-a_{sr}^{(t)}(X),&r<s,\\
0,&r=s.
\end{cases}
\end{equation}
By \cref{prop:matrix-element}, for $r\neq s$ these entries can also be
written as
\begin{equation}\label{eq:A-as-contraction}
a_{rs}^{(t)}(X)
=
\left\langle
\vac,
\Gamma_{+,t}(X)\phi_{-r}\phi_{-s}\vac
\right\rangle.
\end{equation}
Here $\phi_0\vac=\vac$, while the diagonal entries of
$A_h^{(t)}(X)$ are set equal to zero.

Let
\begin{equation}\label{eq:D-matrix}
D_h
=
\operatorname{diag}(d_h,d_{h-1},\ldots,d_1,d_0),
\qquad
d_r=\frac12\ (r\geq 1),
\quad
d_0=1,
\end{equation}
and set
\begin{equation}\label{eq:B-matrix-def}
B_h(Y)=D_hA_h^{(0)}(Y)D_h.
\end{equation}
Thus, for $r>s\geq 1$,
\begin{equation}\label{eq:B-entries}
(B_h(Y))_{rs}
=
\frac14Q_{(r,s)}(Y),
\qquad
(B_h(Y))_{r0}
=
\frac12q_r(Y).
\end{equation}

For a strict partition
$\lambda=(\lambda_1>\cdots>\lambda_{\ellp}>0)$ with
$\lambda_1\leq h$, define an even ordered subset of $I_h$ by
\begin{equation}\label{eq:J-lambda}
J_\lambda
=
\begin{cases}
(\lambda_1,\ldots,\lambda_{\ellp}),
&\ellp\ \text{even},\\
(\lambda_1,\ldots,\lambda_{\ellp},0),
&\ellp\ \text{odd}.
\end{cases}
\end{equation}
For the empty partition, set $J_{\varnothing}=\varnothing$. Since the
order in \eqref{eq:index-order} is decreasing,
\cref{thm:Pfaffian-Giambelli} gives
\begin{equation}\label{eq:principal-A}
\Pf\left(A_h^{(t)}(X)[J_\lambda]\right)
=
\cQ_\lambda(X;t).
\end{equation}
Also, $J_\lambda$ contains exactly $\ellp(\lambda)$ positive indices,
so
\begin{align}
\Pf\left(B_h(Y)[J_\lambda]\right)
&=
\det\left(D_h[J_\lambda]\right)
\Pf\left(A_h^{(0)}(Y)[J_\lambda]\right)\notag\\
&=
2^{-\ellp(\lambda)}Q_\lambda(Y)
=
P_\lambda(Y).
\label{eq:principal-B}
\end{align}
Every even subset of $I_h$ is uniquely of the form $J_\lambda$.

\begin{theorem}\label{thm:shifted-Gessel}
For every integer $h\geq 1$,
\begin{align}
\sum_{\substack{\lambda\in\SP\\\lambda_1\leq h}}
\cQ_\lambda(X;t)P_\lambda(Y)
&=
\Pfint
\begin{pmatrix}
-A_h^{(t)}(X)&I_{h+1}\\
-I_{h+1}&B_h(Y)
\end{pmatrix}
\label{eq:shifted-Gessel-Pf}\\
&=
\det\left(
I_{h+1}-A_h^{(t)}(X)B_h(Y)
\right)^{1/2}.
\label{eq:shifted-Gessel-det}
\end{align}
\end{theorem}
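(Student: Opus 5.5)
The Pfaffian form \eqref{eq:shifted-Gessel-Pf} follows by combining the finite summation rule with the principal-minor computations already made. I apply \cref{lem:finite-Wick} with index set $I=I_h$ in the order \eqref{eq:index-order}, taking $A=A_h^{(t)}(X)$ and $B=B_h(Y)$. This gives
\[
\Pfint\begin{pmatrix}-A_h^{(t)}(X)&I_{h+1}\\-I_{h+1}&B_h(Y)\end{pmatrix}
=\sum_{\substack{J\subseteq I_h\\|J|\ \mathrm{even}}}\Pf\bigl(A_h^{(t)}(X)[J]\bigr)\Pf\bigl(B_h(Y)[J]\bigr).
\]
I then reindex the sum by strict partitions with $\lambda_1\le h$, using the bijection $\lambda\mapsto J_\lambda$. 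To justify that this is a bijection, note that an even subset $J$ either omits $0$, in which case its elements are the parts of $\lambda$ with $\ellp$ even, or contains $0$, in which case the positive elements form $\lambda$ with $\ellp$ odd. In both cases the induced order from $I_h$ is decreasing, which matches \eqref{eq:J-lambda}. Substituting \eqref{eq:principal-A} and \eqref{eq:principal-B} term by term gives the left-hand side, with the empty partition contributing $1\cdot1$.

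For the determinant form \eqref{eq:shifted-Gessel-det}, I square the Pfaffian. The interlaced reordering is a simultaneous permutation of rows and columns, so it does not change the determinant, and
\[
\Pfint(M)^2=\det\begin{pmatrix}-A&I\\-I&B\end{pmatrix}.
\]
Since the lower-left block $-I$ is invertible, I take the Schur complement with respect to it. Equivalently, I row-reduce using
\[
\begin{pmatrix}I&A\\0&I\end{pmatrix}\begin{pmatrix}-A&I\\-I&B\end{pmatrix}=\begin{pmatrix}-2A&I+AB\\-I&B\end{pmatrix},
\]
although the cleaner route is the direct formula
\[
\det\begin{pmatrix}P&Q\\R&S\end{pmatrix}=\det(R)\det\bigl(Q-PR^{-1}S\bigr)\cdot(\pm1).
\]
With $P=-A$, $Q=I$, $R=-I$, $S=B$ this becomes $\det(-I)\det(I-AB)$ up to the block-swap sign $(-1)^{(h+1)^2}$. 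The two factors of $(-1)^{h+1}$ cancel, so the determinant equals $\det(I-AB)$, and I will verify this sign carefully.

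The main obstacle is fixing the branch of the square root. Both sides are formal power series in the variables with constant term $1$: $A$ and $B$ have no constant part except through $q_0=1$. In particular $a_{r0}=q_r$ with $r\ge1$ has no constant term, so $AB$ has zero constant term. The Pfaffian side also has constant term $1$, coming from $\lambda=\varnothing$. Hence the Pfaffian is the unique square root of $\det(I-AB)$ with constant term $1$, which is exactly what $\det(\cdot)^{1/2}$ denotes in \eqref{eq:shifted-Gessel-det}.
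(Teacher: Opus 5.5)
Your proof is correct and follows essentially the same route as the paper: apply \cref{lem:finite-Wick}, reindex the even subsets of $I_h$ by strict partitions through $J_\lambda$ using \eqref{eq:principal-A} and \eqref{eq:principal-B}, square the Pfaffian, and fix the square-root branch by the constant term $1$. The one difference is in proving \eqref{eq:block-det}: you take the Schur complement with respect to the always-invertible block $-I_{h+1}$, and your sign count $(-1)^{h+1}\cdot(-1)^{h+1}=1$ is right, whereas the paper assumes $A$ invertible and then uses polynomial continuation; your version avoids the continuation step, which is a small gain because for even $h$ the skew-symmetric $A_h^{(t)}(X)$ has odd size and is never invertible.
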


\begin{proof}
By \eqref{eq:principal-A}, \eqref{eq:principal-B}, and the bijection
between strict partitions and even subsets of $I_h$,
\begin{align*}
&\sum_{\substack{\lambda\in\SP\\\lambda_1\leq h}}
\cQ_\lambda(X;t)P_\lambda(Y)\\
&\qquad=
\sum_{\substack{J\subseteq I_h\\|J|\,\mathrm{even}}}
\Pf\left(A_h^{(t)}(X)[J]\right)
\Pf\left(B_h(Y)[J]\right).
\end{align*}
Now apply \cref{lem:finite-Wick} to obtain
\eqref{eq:shifted-Gessel-Pf}.

The square of a Pfaffian is the determinant of its skew-symmetric
matrix. Also,
\begin{equation}\label{eq:block-det}
\det
\begin{pmatrix}
-A&I\\
-I&B
\end{pmatrix}
=
\det(I-AB).
\end{equation}
For example, this follows first when $A$ is invertible by a Schur
complement, and then in general by polynomial continuation. The
Pfaffian in \eqref{eq:shifted-Gessel-Pf} has constant term $1$, which fixes
the square-root branch and proves \eqref{eq:shifted-Gessel-det}.
\end{proof}

It is useful to put the determinant in the symmetric normalization used
for the shifted Gessel formula. Define
\begin{equation}\label{eq:K-def}
K_h^{(t)}(X)
=
D_h^{1/2}A_h^{(t)}(X)D_h^{1/2},
\qquad
K_h(Y)
=
D_h^{1/2}A_h^{(0)}(Y)D_h^{1/2}.
\end{equation}
Explicitly,
\begin{equation}\label{eq:K-entries}
(K_h^{(t)}(X))_{rs}
=
\begin{cases}
\dfrac12\cQ_{(r,s)}(X;t),&r,s\geq 1,\\[4pt]
\dfrac{1}{\sqrt2}q_r(X;t),&r\geq 1,\ s=0,\\[4pt]
-\dfrac{1}{\sqrt2}q_s(X;t),&r=0,\ s\geq 1,\\[4pt]
0,&r=s=0.
\end{cases}
\end{equation}
The matrices $A_h^{(t)}(X)B_h(Y)$ and
$K_h^{(t)}(X)K_h(Y)$ are similar. Hence
\begin{equation}\label{eq:shifted-Gessel-K}
\sum_{\substack{\lambda\in\SP\\\lambda_1\leq h}}
\cQ_\lambda(X;t)P_\lambda(Y)
=
\det\left(
I_{h+1}-K_h^{(t)}(X)K_h(Y)
\right)^{1/2}.
\end{equation}

\begin{corollary}\label{cor:t0-Gessel}
At $t=0$, \eqref{eq:shifted-Gessel-K} becomes
\begin{equation}\label{eq:shifted-Gessel}
\sum_{\substack{\lambda\in\SP\\\lambda_1\leq h}}
Q_\lambda(X)P_\lambda(Y)
=
\det\left(
I_{h+1}-K_h(X)K_h(Y)
\right)^{1/2},
\end{equation}
which is the shifted Gessel formula.
\end{corollary}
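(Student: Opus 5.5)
The plan is to specialize \eqref{eq:shifted-Gessel-K} at $t=0$ and identify each ingredient with its classical counterpart. First, at $t=0$ the operator $\Yop(z;0)$ is the field $\Phi(z)$ of \eqref{eq:standard-neutral-field}. Hence \cref{def:shifted-t-Schur} together with \eqref{eq:Q-state-early} gives $\cQ_\lambda(X;0)=Q_\lambda(X)$ for every strict $\lambda$, so the left-hand side of \eqref{eq:shifted-Gessel-K} becomes $\sum_{\lambda_1\le h}Q_\lambda(X)P_\lambda(Y)$. Second, the entries \eqref{eq:A-entries} at $t=0$ are $Q_{(r,s)}(X)$ and $q_r(X)$. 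Thus $A_h^{(0)}(X)$ is the same matrix as the one used to build $B_h$, and by \eqref{eq:K-def} we get $K_h^{(0)}(X)=K_h(X)$. Substituting these into \eqref{eq:shifted-Gessel-K} yields \eqref{eq:shifted-Gessel} formally.

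There is one point to check. \cref{rem:t1} warns against specializing at $t=1$, but $t=0$ is harmless. The operator's coefficients $(1-t^n)$ and $(1-t^n)^{-1}$ are regular at $t=0$. Likewise $q_r(X;t)$ and every $\cQ_\lambda(X;t)$ are polynomials in $t$ with coefficients in $\Gamma$, by \eqref{eq:Ht-def} and \cref{prop:multipoint}. So the identity in $\Gamma[t]$ (completed in $Y$) may be evaluated at $t=0$. The square-root branch is also preserved: as in the proof of \cref{thm:shifted-Gessel}, both sides have constant term $1$ in the grading, and this property is stable under $t\mapsto 0$.

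Finally, the similarity claim used in \eqref{eq:shifted-Gessel-K} must be justified at $t=0$. One computes $A B = A D A' D = D^{-1/2}(D^{1/2}AD^{1/2})(D^{1/2}A'D^{1/2})D^{1/2}$, which is conjugate to $K_h^{(0)}(X)K_h(Y)$ since $D_h$ is an invertible diagonal matrix. To call the result ``the shifted Gessel formula,'' I will compare \eqref{eq:K-entries} at $t=0$ with the kernel in \cite{TracyWidom}: entries $\tfrac12Q_{(r,s)}$ in the bulk and $\tfrac1{\sqrt2}q_r$ on the border row and column. The main obstacle, if any, is this bookkeeping match with the normalization of Tracy--Widom, including index order and sign conventions. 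I would handle it by noting that reversing the order of $I_h$ conjugates both $K$ matrices by the same permutation, which does not change the determinant.
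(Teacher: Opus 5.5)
Your proposal is correct and matches the paper, which gives no separate proof and treats the corollary as the direct specialization of \eqref{eq:shifted-Gessel-K} at $t=0$, using $\cQ_\lambda(X;0)=Q_\lambda(X)$ and $K_h^{(0)}(X)=K_h(X)$. Your extra checks are sound and make explicit what the paper leaves unstated: polynomiality in $t$ justifies evaluating at $t=0$, and the conjugation $A_h^{(t)}(X)B_h(Y)=D_h^{-1/2}K_h^{(t)}(X)K_h(Y)D_h^{1/2}$ justifies the similarity.
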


\begin{example}\label{ex:h1}
For $h=1$, the strict partitions are $\varnothing$ and $(1)$. Hence
\[
\sum_{\lambda_1\leq 1}
\cQ_\lambda(X;t)P_\lambda(Y)
=
1+\frac12q_1(X;t)q_1(Y).
\]
In the order $(1,0)$,
\[
A_1^{(t)}(X)
=
\begin{pmatrix}
0&q_1(X;t)\\
-q_1(X;t)&0
\end{pmatrix},
\qquad
B_1(Y)
=
\begin{pmatrix}
0&\frac12q_1(Y)\\
-\frac12q_1(Y)&0
\end{pmatrix}.
\]
Therefore,
\[
\det\left(I_2-A_1^{(t)}(X)B_1(Y)\right)^{1/2}
=
1+\frac12q_1(X;t)q_1(Y),
\]
as expected.
\end{example}

Finally, the Cauchy identity is the coefficientwise stable limit of the
Gessel formula. For every fixed total degree, the condition
$\lambda_1\leq h$ is automatic when $h$ is sufficiently large. Thus
\cref{thm:shifted-Cauchy,thm:shifted-Gessel} give
\begin{equation}\label{eq:stable-limit}
\lim_{h\to\infty}
\det\left(
I_{h+1}-K_h^{(t)}(X)K_h(Y)
\right)^{1/2}
=
\prod_{i,j\geq 1}
\frac{(1+x_i y_j)(1-tx_i y_j)}
     {(1-x_i y_j)(1+tx_i y_j)}.
\end{equation}

\end{document}